\newcommand{\sysn}{\left\{\begin{array}{rcl}}
\newcommand{\sysk}{\end{array}\right.}
\newtheorem{theorem}{Theorem}[section]
\newtheorem{lemma}[theorem]{Lemma}
\theoremstyle{example}
\newtheorem{proposition}[theorem]{Proposition}
\newtheorem{corollary}[theorem]{Corollary}
\theoremstyle{definition}
\newtheorem{definition}[theorem]{Definition}
\newtheorem{question}[theorem]{Question}
\journal{...}
\begin{document}

\title{On resolvability, connectedness and pseudocompactness}

\author{Anton E. Lipin}

\address{Krasovskii Institute of Mathematics and Mechanics, \\ Ural Federal
 University, Yekaterinburg, Russia}

\ead{tony.lipin@yandex.ru}

\begin{abstract} We prove that:

I. If $L$ is a $T_1$ space, $|L|>1$ and $d(L) \leq \kappa \geq \omega$, then
there is a submaximal dense subspace $X$ of $L^{2^\kappa}$ such that $|X|=\Delta(X)=\kappa$;

II. If $\frak{c}\leq\kappa=\kappa^\omega<\lambda$ and $2^\kappa=2^\lambda$, then there is a Tychonoff pseudocompact globally and locally connected space $X$ such that $|X|=\Delta(X)=\lambda$ and $X$ is not $\kappa^+$-resolvable;

III. If $\omega_1\leq\kappa<\lambda$ and $2^\kappa=2^\lambda$, then there is a regular space $X$ such that $|X|=\Delta(X)=\lambda$, all continuous real-valued functions on $X$ are constant (so $X$ is pseudocompact and connected) and $X$ is not $\kappa^+$-resolvable.
\end{abstract}

\begin{keyword} resolvability, submaximality, connectedness, local connectedness, pseudocompactness, Tychonoff powers

\MSC[2020]  54A25, 54A35, 54B10, 54D05

\end{keyword}

\maketitle 


\section{Introduction}

We start by recalling a central notion (E.~Hewitt, \cite{Hewitt}, 1943; J.G.~Ceder, \cite{Ceder}, 1964).
Let $X$ be a topological space and let $\kappa$ be a cardinal.
The space $X$ is called $\kappa$-{\it resolvable} if $X$ contains $\kappa$ many pairwise disjoint dense subsets.

Obviously, the necessary condition of $\kappa$-resolvability of a space is inequality $\kappa\leq|U|$ for all nonempty open subsets $U$.
The smallest cardinality of a nonempty open set in $X$ is called a {\it dispersion character} of the space $X$ and it is denoted by $\Delta(X)$.
The space $X$ is called $\kappa$-{\it irresolvable} if $\kappa\leq\Delta(X)$ and $X$ is not $\kappa$-resolvable.
The space $X$ is called {\it maximally resolvable} if $X$ is $\Delta(X)$-resolvable.
If $\kappa=2$, then instead of $2$-resolvable and $2$-irresolvable we say {\it resolvable} and {\it irresolvable}, respectively.

We refer the reader to a selective survey \cite{survey} made by O.~Pavlov in 2007 for more detailed information on resolvability.

In this paper, we focus on two properties whose relation to resolvability is mostly unknown.
We show that even together these two properties do not entail maximal resolvability~--- at least, in ZFC and in the class of Tychonoff spaces.

The first property is connectedness. Hausdorff examples of irresolvable connected spaces were constructed by many authors.
To the author's knowledge, the earliest one, published in 1953, belongs to K.~Padmavally \cite{Padmavally}.
The question whether every infinite regular connected space is resolvable remains open, although something is known for local connectedness
(recall that a space is said to be {\it locally connected} if it has a base, consisting of connected sets).

C.~Costantini proved in 2005 that every crowded (i.e. without isolated points) regular locally connected space is $\omega$-resolvable \cite{Costantini}.
In \cite{Pavlov} Pavlov writes that I.V.~Yaschenko proved that every crowded locally connected Tychonoff space is $\frak{c}$-resolvable.
A.~Dehghani and M.~Karavan proved in 2015 that every crowded locally connected functionally Hausdorff space is $\frak{c}$-resolvable \cite{DK}.

The second property we focus on in this paper is pseudocompactness.
Recall that a space $X$ is called pseudocompact if every continuous real-valued function on $X$ is bounded.
Two different examples of a Hausdorff pseudocompact (actually, even countably compact) irresolvable space were published by V.I.~Malykhin in 1998 \cite{Malykhin} and by Pavlov in 2001 \cite{Pavlov}.

For Tychonoff pseudocompact spaces some positive results were obtained in 2016-2018.
J.~van Mill proved that a crowded Tychonoff pseudocompact space $X$ is $\frak{c}$-resolvable if $c(X)=\omega$ (where $c(X)$ is the cellularity of $X$) \cite{Mill}.
Then Y.F. Ortiz-Castillo and A.H. Tomita proved that a crowded Tychonoff pseudocompact space $X$ is resolvable if $c(X)\leq\frak{c}$ \cite{OT}.
Finally, I. Juhasz, L. Soukup and Z. Szentmiklossy published a theorem, from which it follows that a crowded Tychonoff pseudocompact space $X$ is $\frak{c}$-resolvable if $c(X)<\frak{c}^{+\omega}$.
Moreover, it is consistent that every crowded Tychonoff pseudocompact space $X$ is $\frak{c}$-resolvable and the consistency of the negation of this requires large cardinals \cite{JSSz}.
The question whether resolvability of a crowded Tychonoff pseudocompact space can be proved in ZFC remains open.

We also refer the reader to \cite{Pytkeev, Lipin} for some results on resolvability of countably compact spaces.

Our choice of these two properties may seem strange, but the point is that both properties $\mathcal{P}\in\{\text{connectedness}, \text{pseudocompactness}\}$
satisfy the following condition: if $X$ is a space and some dense subset of $X$ has $\mathcal{P}$, then $X$ has $\mathcal{P}$.

This observation leads us to the following idea of an example.
Let us take some space $X$ and find two dense subspaces $C,E \subseteq X$ such that $C$ is connected pseudocompact and $|C|=\kappa$,
whereas $E$ is irresolvable and $\Delta(E)>\kappa$.
Clearly, the subspace $C \cup E$ is connected pseudocompact and $\kappa^+$-irresolvable.

We suppose that the most natural choice of $X$ here is a Tychonoff cube.
We use it in section 4, constructing a consistent example of a Tychonoff pseudocompact globally and locally connected $\frak{c}^+$-irresolvable space.
Our method cannot give less than $\frak{c}$-resolvable Tychonoff space (connected or pseudocompact), but in section 5 we construct a consistent with $\neg \mathrm{CH}$ example of a regular pseudocompact connected $\omega_2$-irresolvable space. This time we choose as $X$ some power of the space constructed in 2016 by K.C.~Ciesielski and J.~Wojciechowski \cite[Theorem 7(i,iii)]{CW}.

The construction of $C$ is relatively easy modulo Hewitt-Marczewski-Pondiczery theorem \cite[Theorem 2.3.15]{Engelking}.

The construction of $E$ requires a bit more work. We do it in section 3. Actually, $E$ will be submaximal (which means that all dense subsets of $E$ are open). Our technique can be briefly described as a recursive determination of coordinates in $X$ of all points of $E$. We refer the reader to \cite{ATTW, JSSz-1, CH-1, CH} for other examples of this and similar methods in resolvability. In particular, in \cite{JSSz-1} a dense submaximal subspace is constructed in every Cantor cube of the form $\{0,1\}^{2^\kappa}$. As a corollary, the authors of \cite{JSSz-1} obtain a countable dense submaximal subspace of $[0,1]^\frak{c}$.
It is not clear whether their technique can be used to construct an uncountable dense submaximal subspace in any Tychonoff cube.

\section{Preliminaries}

We assume the following notation and conventions.

\begin{itemize}

\item Symbol $\bigsqcup$ denotes {\it disjoint union} in the following sense: its equal to the usual union, but using it we assume that the united sets are pairwise disjoint.

\item We use letters $\kappa,\lambda,\mu$ for cardinals and $\alpha,\beta,\gamma$ for ordinals.

\item If $f$ is a function, then we write $f[A]$ for $\{f(x) : x\in A\}$. It is essential if $A$ is both a subset and an element of the domain of $f$.

\item If $f,g$ are functions, then $f \circ g$ is a composition $(f \circ g)(x) = g(f(x))$.

\item If $f : X \to Y$ and $A \subseteq X$, then $f|_A$ is a restriction of $f$ to $A$.

\item {\it Space} means topological space.

\item $\Delta(X)$ is the dispersion character of the space $X$, i.e. the smallest cardinality of a nonempty open subset of $X$.

\item If $A$ is a subset of some space, then $\overline{A}$ is the closure of $A$.

\item A set $A$ is called $\kappa$-{\it dense} in a space $X$ if $|A \cap U|\geq\kappa$ holds for all nonempty open $U \subseteq X$.

\item $d(X)$ is the {\it density} of the space $X$, i.e. the smallest cardinality of a dense subset of $X$.

\end{itemize}

We also need the following formal ``improvement'' of the Hewitt-Marczewski-Pondiczery theorem.
In fact, this ``improvement'' is obtained in the classic proof (as in \cite[Theorem 2.3.15]{Engelking}).

\begin{theorem}\label{T_HMP}
If $X$ is a space, $|X|>1$ and $d(X)\leq\kappa\geq\omega$, then there is a set $A \subseteq X^{2^\kappa}$ of cardinality $\kappa$ such that:
\begin{itemize}
\item[(1)] $A$ is $\kappa$-dense in $ X^{2^\kappa}$;

\item[(2)] for every $x \in A$ the set $\{x(\alpha) : \alpha < 2^\kappa\}$ is finite.
\end{itemize}
\end{theorem}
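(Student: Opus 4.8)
The plan is to run the classical proof of the Hewitt--Marczewski--Pondiczery theorem for the power $X^{2^\kappa}$, noting that it already manufactures points with finite range, and then to add one product-manipulation step that upgrades ``dense of size $\le\kappa$'' to ``$\kappa$-dense of size exactly $\kappa$''. First I would reduce to a discrete factor. Since $|X|>1$, the space $X$ has a dense subset $D$ with $2\le|D|\le\kappa$: if $d(X)\ge2$ take a dense set of cardinality $d(X)\le\kappa$, and if $d(X)=1$ take a dense point together with one further point of $X$. Give $D$ the discrete topology. Then $D^{2^\kappa}$ is still dense in $X^{2^\kappa}$, and the resulting product topology on $D^{2^\kappa}$ refines the topology it inherits from $X^{2^\kappa}$; consequently any subset of $D^{2^\kappa}$ that is $\kappa$-dense for the product topology is $\kappa$-dense in $X^{2^\kappa}$ as well (if $U$ is nonempty open in $X^{2^\kappa}$ then $U\cap D^{2^\kappa}$ is nonempty and open in that product topology, hence already carries $\kappa$ points of the set). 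Thus it suffices to produce, inside $P:=D^{2^\kappa}$ with $D$ discrete, a set $A$ with $|A|=\kappa$, $\kappa$-dense in $P$, all of whose points have finite range.

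Next I would reproduce the classical construction. Fix a bijection between $2^\kappa$ and $\mathcal{P}(\kappa)$, so that $P=D^{\mathcal{P}(\kappa)}$. For a finite $E\subseteq\kappa$ and a function $g\in D^{\mathcal{P}(E)}$ define $f_{E,g}\in P$ by $f_{E,g}(S)=g(S\cap E)$, and put $A_0=\{\,f_{E,g}:E\in[\kappa]^{<\omega},\ g\in D^{\mathcal{P}(E)}\,\}$. Then $|A_0|\le\kappa$, since $|[\kappa]^{<\omega}|=\kappa$ and for each fixed $E$ there are $|D|^{2^{|E|}}\le\kappa$ admissible $g$; in fact $|A_0|=\kappa$, because (using $|D|\ge2$) the $\kappa$ points $f_{\{\xi\},g}$ with $g(\emptyset)\ne g(\{\xi\})$, $\xi<\kappa$, are pairwise distinct. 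Each $f_{E,g}$ has range $g[\mathcal{P}(E)]$, which is finite, so condition (2) is automatic for $A_0$. And $A_0$ is dense: a basic open set of $P$ has the form $\bigcap_{i\le n}\{f:f(S_i)=\delta_i\}$ with $S_1,\dots,S_n\in\mathcal{P}(\kappa)$ pairwise distinct and $\delta_i\in D$; choosing a point of $S_i\triangle S_j$ for every $i\ne j$ gives a finite $E\subseteq\kappa$ with $S_1\cap E,\dots,S_n\cap E$ pairwise distinct, and then any $g$ with $g(S_i\cap E)=\delta_i$ puts $f_{E,g}$ inside that basic open set. This is essentially the argument of \cite[Theorem 2.3.15]{Engelking}.

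Finally I would boost density using $\kappa\cdot2^\kappa=2^\kappa$: identify $P=D^{2^\kappa}$ with $\prod_{\eta<\kappa}P_\eta$, each $P_\eta$ a copy of $D^{2^\kappa}$; this re-indexing does not change the range of a point. Fix $a^*\in A_0$ and let $A$ be the set of all $x\in\prod_{\eta<\kappa}P_\eta$ such that $x(\eta)\in A_0$ for every $\eta$ and $x(\eta)=a^*$ for all but finitely many $\eta$. Then $|A|\le\sum_{F\in[\kappa]^{<\omega}}|A_0|^{|F|}=\kappa$; every $x\in A$ has finite range, since it agrees with the finite-range point $a^*$ off a finite set of coordinates and on that finite set each coordinate value lies in $A_0$ and is finite-ranged; $A$ is dense, because a basic open set imposes only finitely many coordinate constraints $\eta_1,\dots,\eta_m$, which can be satisfied using density of $A_0$ in the corresponding $P_{\eta_j}$, with $a^*$ placed on all other coordinates; and $A$ is $\kappa$-dense, because for such a basic open set one may fix one more coordinate $\eta_0\notin\{\eta_1,\dots,\eta_m\}$ and let $x(\eta_0)$ range over all of $A_0$, producing $\kappa$ distinct members of $A$ in the basic open set (this is where $|A_0|=\kappa$ enters). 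Since $A$ is $\kappa$-dense, $|A|\ge\kappa$, so $|A|=\kappa$; transporting $A$ back through the re-indexing and through the reduction of the first paragraph yields the required subset of $X^{2^\kappa}$.

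I do not expect any step to be genuinely hard; the one place that needs attention is the last step, where the three demands --- cardinality exactly $\kappa$, $\kappa$-density, and finiteness of all ranges --- must hold simultaneously, and it is precisely the ``eventually equal to a fixed finite-range point'' device that reconciles them. The classical proof on its own delivers only a \emph{dense} subset of size $\le\kappa$, and checking $\kappa$-density of that subset directly is more cumbersome than this product detour.
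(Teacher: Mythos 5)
Your proof is correct. Note that the paper itself offers no proof of this statement: it only remarks that the ``improvement'' is already obtained in the classical proof of Hewitt--Marczewski--Pondiczery as in Engelking, so there is nothing to compare against line by line. Your reconstruction follows exactly that classical route (reduce to a discrete dense factor $D$ with $2\le|D|\le\kappa$, then build the finite-range points $f_{E,g}(S)=g(S\cap E)$ indexed by finite $E\subseteq\kappa$), and this part indeed delivers condition (2) for free, which is presumably all the author means. The one genuinely additional ingredient you supply is the re-indexing $D^{2^\kappa}\cong\prod_{\eta<\kappa}(D^{2^\kappa})$ together with the ``finitely supported over a fixed base point $a^*$'' device, which cleanly upgrades a dense set of size $\kappa$ to a $\kappa$-dense one while preserving both the cardinality bound and the finite-range property; this is a tidy way to discharge the $\kappa$-density claim, which the classical set $A_0$ also satisfies but whose direct verification is, as you say, more fiddly. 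All the individual steps check out: the topology-refinement argument for why $\kappa$-density in the discrete product implies $\kappa$-density in $X^{2^\kappa}$, the pairwise-distinctness of the points $f_{\{\xi\},g}$ giving $|A_0|=\kappa$, and the use of a free coordinate $\eta_0$ to produce $\kappa$ many points in each basic open set.
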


\section{Submaximal subspaces in topological cubes}

Recall that a crowded space $X$ is called {\it submaximal} if every dense subset is open in $X$. Obviously, all submaximal spaces are irresolvable. Our goal in this section is to prove Theorem \ref{T_submax} to obtain a class of submaximal spaces which will be convenient for our further purposes.
Let us fix some additional notation.

\begin{itemize}
\item If $X$ is a space and $\kappa,\lambda$ are cardinals, then we denote $X^{\kappa\oplus\lambda} := X^\kappa\times X^\lambda$.

\item Suppose we are given a space $X$ and functions $f_\alpha : X \to Y_\alpha$ for all $\alpha<\gamma$. We denote by $\nabla_{\alpha<\gamma}f_\alpha$
the function $F : X \to X \times \prod_{\alpha<\gamma}Y_\alpha$ such that $F(x) = (x, \{f_\alpha(x)\}_{\alpha<\gamma})$ for all $x \in X$ (so, $F$ is a map from $X$ to the graph of the diagonal product of the functions $f_\alpha$).
\end{itemize}

The following proposition is just a reformulation of Theorem \ref{T_HMP}(1), obtained by a permutation of coordinates.

\begin{proposition}\label{P_HMP}
If $Q$ is a space, $|Q|>1$ and $d(Q)\leq\kappa\geq\omega$, then there is a set $A \subseteq Q^\kappa$ and functions $f_\alpha:A \to Q$ for $\alpha<2^\kappa$ such that $|A|=\kappa$ and the set $(\nabla_{\alpha<2^\kappa}f_\alpha)[A]$ is $\kappa$-dense in $Q^{\kappa\oplus{2^\kappa}}$.
\end{proposition}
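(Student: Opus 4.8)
The plan is to deduce the statement directly from Theorem~\ref{T_HMP} applied to $Q$ itself; the only real content is a relabelling of the $2^\kappa$ coordinates into one block of size $\kappa$ and one block of size $2^\kappa$, which is legitimate since $\kappa+2^\kappa=2^\kappa$, so $Q^{2^\kappa}$ is homeomorphic to $Q^{\kappa\oplus 2^\kappa}$.

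First I would invoke Theorem~\ref{T_HMP} with $X:=Q$ and the given $\kappa$, obtaining a set $A'\subseteq Q^{2^\kappa}$ with $|A'|=\kappa$ that is $\kappa$-dense in $Q^{2^\kappa}$ (property~(2) of that theorem is not needed here). Next — and this is the one step that is not pure bookkeeping — I would choose $I_0\subseteq 2^\kappa$ with $|I_0|=\kappa$ such that the restriction map $x\mapsto x|_{I_0}$ is injective on $A'$. This is possible: $A'$ has only $\kappa$ many two-element subsets, for each of them one picks (using choice) a coordinate on which its two points differ, and the resulting set of at most $\kappa$ coordinates can be enlarged to a set $I_0$ of size exactly $\kappa$. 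Put $I_1:=2^\kappa\setminus I_0$; since $2^\kappa>\kappa$ we have $|I_1|=2^\kappa$, and I fix bijections $\sigma:\kappa\to I_0$ and $\tau:2^\kappa\to I_1$.

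These bijections induce a homeomorphism $h:Q^{2^\kappa}\to Q^\kappa\times Q^{2^\kappa}=Q^{\kappa\oplus 2^\kappa}$, namely $h(x)=\bigl(\langle x(\sigma(\xi))\rangle_{\xi<\kappa},\ \langle x(\tau(\alpha))\rangle_{\alpha<2^\kappa}\bigr)$, which is merely a permutation of coordinates along the partition $2^\kappa=I_0\sqcup I_1$. Homeomorphisms preserve $\kappa$-density and cardinality, so $h[A']$ is $\kappa$-dense in $Q^{\kappa\oplus 2^\kappa}$ and $|h[A']|=\kappa$. Writing $h(x)=(y_x,z_x)$ with $y_x\in Q^\kappa$ and $z_x\in Q^{2^\kappa}$, the choice of $I_0$ makes $x\mapsto y_x$ a bijection of $A'$ onto $A:=\{y_x:x\in A'\}\subseteq Q^\kappa$, so for each $\alpha<2^\kappa$ the rule $f_\alpha(y_x):=z_x(\alpha)$ defines a genuine function $f_\alpha:A\to Q$. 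Then $\nabla_{\alpha<2^\kappa}f_\alpha$ carries $y_x$ to $(y_x,\langle z_x(\alpha)\rangle_{\alpha<2^\kappa})=h(x)$, whence $(\nabla_{\alpha<2^\kappa}f_\alpha)[A]=h[A']$ is $\kappa$-dense in $Q^{\kappa\oplus 2^\kappa}$ while $|A|=\kappa$, as required.

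The main — indeed essentially the only — obstacle is the separation step: one has to ensure that the $f_\alpha$ are well-defined functions, which is exactly why $I_0$ must be chosen so as to distinguish the points of $A'$ rather than taken arbitrarily. Once that is arranged, the rest is the coordinate permutation promised in the statement.
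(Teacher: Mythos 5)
Your argument is correct and follows essentially the same route as the paper: take a $\kappa$-dense set of size $\kappa$ in $Q^{2^\kappa}$ (your $A'$ is the paper's $B$), pick a set of $\kappa$ coordinates separating its points (your $I_0$ is the paper's $M$), and use bijections onto that set and its complement to split the coordinates into the $\kappa$-block and the $2^\kappa$-block (your $\sigma,\tau$ are the paper's $\varphi,\psi$). Your extra justification of why the separating set $I_0$ exists, which the paper leaves implicit, is a welcome addition.
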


\begin{proof}
Take any $\kappa$-dense set $B$ of cardinality $\kappa$ in $Q^{2^\kappa}$.
Choose any set $M \subseteq 2^\kappa$ of cardinality $\kappa$ with the property that for any different $x,y \in B$ there is an ordinal $\alpha \in M$ such that $x(\alpha) \ne y(\alpha)$.
Denote by $N$ the set $2^\kappa \setminus M$.
Choose any bijections $\varphi : \kappa \leftrightarrow M$ and $\psi : 2^\kappa \leftrightarrow N$.
Define $A := \{\varphi \circ x : x \in B\}$.
It is obvious that $\varphi \circ x \ne \varphi \circ y$ for all different $x,y \in B$.
For every $\alpha < 2^\kappa$ define the function $f_\alpha : A \to Q$ is such a way that $f_\alpha(\varphi \circ x) = x(\psi(\alpha))$ for all $x \in B$.
It is clear that the set $A$ and the functions $f_\alpha$ are as required.
\end{proof}

We also recall the following fact.

\begin{proposition}\label{P_submax}
If $X$ is a $T_1$ space and all $\Delta(X)$-dense subsets of $X$ are open, then $X$ is submaximal.
\end{proposition}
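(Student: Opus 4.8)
The plan is to show that if $X$ is a $T_1$ space in which every $\Delta(X)$-dense subset is open, then in fact every dense subset of $X$ is open, which is precisely submaximality (note $X$ must be crowded: a point isolated in $X$ would force $\Delta(X)=1$, and then the set $X$ minus that point is dense but violates the hypothesis, or one argues directly that $T_1$ plus the hypothesis rules out isolated points). So let $D \subseteq X$ be an arbitrary dense set; I want to produce, for each $x \in D$, an open neighborhood of $x$ contained in $D$.

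First I would reduce to the $\Delta(X)$-dense case by enlarging $D$ cleverly. Fix $x \in D$. The idea is to throw away a small set to turn $D$ into something $\Delta(X)$-dense while keeping $x$ inside and controlling what we removed. Concretely, I would look at $D' := D \cup (X \setminus \overline{\{x\}}^c)$-type modifications; more precisely, consider for each point $y \in X \setminus D$ whether $D \cup \{y\}$ can be kept dense, and instead work as follows. Since $D$ is dense, $X \setminus D$ is codense (has empty interior). The set $D$ may fail to be $\Delta(X)$-dense only because for some nonempty open $U$ we have $|D \cap U| < \Delta(X) \le |U|$, so $U \setminus D$ is nonempty; but $U \setminus D$ is also codense in $U$. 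The trick is to add to $D$ all of $X$ except one suitably chosen point. Fix $x \in D$; by $T_1$, $\{x\}$ is closed, and since $X$ is crowded $X \setminus \{x\}$ is dense, open... no: $\{x\}$ closed means $X \setminus \{x\}$ is open and dense. Now set $E := D \cup (X \setminus \{x\})$... but that is just $X$ if $x \notin D$, and equals $X$ anyway up to the point $x$. Let me instead argue: the set $X \setminus \{x\}$ is open and dense, hence $\Delta(X)$-dense (every nonempty open $U$ meets it in $U$ or $U \setminus \{x\}$, of size $\ge \Delta(X)$ since $\Delta(X) \ge \omega$ forces $|U \setminus \{x\}| = |U| \ge \Delta(X)$ when $X$ is crowded and $T_1$). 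By hypothesis $X \setminus \{x\}$ is open — which we already knew. That is not yet enough, so the real step is:

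For the given dense $D$ and the fixed $x \in D$, define $D_x := D \cup (X \setminus \overline{D^{\,}})$ — but $D$ dense makes that vacuous. The genuine move, which I expect to be the crux, is this: I claim $D \cup (X \setminus \{x\})$ being open is useless, but $D$ itself together with enough extra points one at a time works. Choose the witnessing construction: the set $D$ is dense, so for every nonempty open $U$, $D \cap U \ne \emptyset$. Take any $z \in D \cap U$; since $X$ is crowded and $T_1$, every neighborhood of $z$ is infinite, in fact has size $\ge \Delta(X)$, and $D$ dense in $X$ implies $D$ dense in $U$, but density does not give $\Delta(X)$-density. So the main obstacle is upgrading density to $\Delta(X)$-density. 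I would resolve it by the standard observation that $D \cup ((X\setminus\{x\}) \setminus D)$... The clean path: let $E = (X \setminus \{x\}) \cup \{x\} = X$ — clearly not it. Therefore the intended argument must be: apply the hypothesis not to $D$ but to $D \cup (X \setminus \overline{\{x\}})$; since $\overline{\{x\}} = \{x\}$ by $T_1$, this is $D \cup (X \setminus \{x\})$. If $x$ is the only point possibly outside, this set is $\Delta(X)$-dense (as $X$ is crowded, every nonempty open set has $\ge \Delta(X) \ge \omega$ points, so removing one point leaves $\ge \Delta(X)$), hence open by hypothesis; thus $X \setminus \{x\}$-type sets are open. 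Then one shows a dense $D$ equals $\bigcap_{x \notin D}(X \setminus \{x\})$ intersected appropriately — no. I therefore expect the actual proof to run: $D$ dense $\Rightarrow$ for each $x \in D$, the set $D \cup (X \setminus \{x\}) = X \setminus (\{x\} \setminus D) = X$ if $x \in D$; so this is trivially open; instead one takes $D' := D \cup (X \setminus \overline D)$ is wrong too. Given my uncertainty on the exact packaging, I flag the upgrade from dense to $\Delta(X)$-dense as the single step to get right, and conclude: once every $\Delta(X)$-dense set is open, given dense $D$ and $x \in D$, the set $D \cup (X \setminus \{x\})$ is $\Delta(X)$-dense (crowdedness), hence open, hence contains an open $V \ni x$; then $V \setminus \{x\} \subseteq D$, and since any point of $V$ other than $x$ can play the role of $x$, iterating (or using that $V$ was arbitrary small) yields an open neighborhood of $x$ inside $D$. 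Hence $D$ is open and $X$ is submaximal. $\qed$
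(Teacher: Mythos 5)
There is a genuine gap. You correctly flag the crux — upgrading ``dense'' to ``$\Delta(X)$-dense'' so that the hypothesis applies — but you never actually carry out that upgrade, and the argument you finally settle on does not work. For $x \in D$ the set $D \cup (X \setminus \{x\})$ is simply all of $X$ (you even notice this mid-proof and then use the step anyway), so the hypothesis applied to it yields no information; in particular the open set $V \ni x$ you extract can be taken to be $X$ itself, and the ensuing claim $V \setminus \{x\} \subseteq D$ is unjustified and generally false. The concluding appeal to ``iterating'' is not an argument. The many abandoned attempts surrounding this step do not add up to a proof either.

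The missing idea, which is how the paper proceeds, is to apply the hypothesis not to modifications of $D$ but to complements of \emph{small} sets. If $|S| < \Delta(X)$, then $X \setminus S$ meets every nonempty open $U$ in a set of size $|U| \ge \Delta(X)$ (here $\Delta(X) \ge \omega$), so $X \setminus S$ is $\Delta(X)$-dense, hence open and dense; thus every set of cardinality less than $\Delta(X)$ is closed and nowhere dense. Now if $D$ is dense but not $\Delta(X)$-dense, there is a nonempty open $U$ with $|D \cap U| < \Delta(X)$; then $D \cap U$ is nowhere dense yet dense in $U$, a contradiction. Hence every dense set is $\Delta(X)$-dense and therefore open by hypothesis, which is submaximality. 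Your local, point-by-point strategy (finding a neighborhood of each $x \in D$ inside $D$) is not needed once this global observation is in place.
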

\begin{proof}
Let $D$ be any subset of $X$ with cardinality less than $\Delta(X)$. Clearly, the set $E := X \setminus D$ is $\Delta(X)$-dense, hence open, so $D$ is nowhere dense.
So, all dense subsets of $X$ are $\Delta(X)$-dense, consequently open.
\end{proof}

\begin{theorem}\label{T_submax}
If $L$ is a $T_1$ space, $|L|>1$ and $d(L) \leq \kappa \geq \omega$, then
there is a submaximal $\kappa$-dense subspace with cardinality $\kappa$ in $L^{2^\kappa}$.
\end{theorem}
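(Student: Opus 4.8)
The plan is to build the desired subspace $E$ inside $Q^{\kappa \oplus 2^\kappa}$, where $Q := L$ and we identify $L^{2^\kappa}$ with $Q^{\kappa \oplus 2^\kappa} = Q^\kappa \times Q^{2^\kappa}$ after a harmless permutation of coordinates. By Proposition~\ref{P_HMP} we start from a $\kappa$-dense set $A \subseteq Q^\kappa$ of cardinality $\kappa$ together with functions $f_\alpha : A \to Q$, $\alpha < 2^\kappa$, such that $(\nabla_{\alpha<2^\kappa} f_\alpha)[A]$ is $\kappa$-dense in $Q^{\kappa \oplus 2^\kappa}$. The role of the ``free'' coordinates indexed by $2^\kappa$ is that there are $2^\kappa$ of them, which is exactly the number of basic open sets (equivalently, the number of potential ``threats'' to submaximality) one needs to kill; so the idea is to re-choose the functions $f_\alpha$ one coordinate at a time, by transfinite recursion of length $2^\kappa$, so that the resulting graph $E := (\nabla_{\alpha<2^\kappa} g_\alpha)[A]$ is submaximal while remaining $\kappa$-dense (the latter being automatic, since changing values in the $2^\kappa$-block does not affect $\kappa$-density coming from the $\kappa$-block $A$, provided each $g_\alpha$ is still a function $A \to Q$ — density is witnessed already on the first $\kappa$ coordinates).

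By Proposition~\ref{P_submax} it suffices to arrange that every $\Delta(E)$-dense subset of $E$ is open; and since $|E| = \kappa$ while $E$ is $\kappa$-dense, $\Delta(E) = \kappa$, so ``$\Delta(E)$-dense'' means ``dense'' (every dense subset of $E$ has size $\kappa$, as it meets each of the $\kappa$-many points... more precisely a dense subset must be $\kappa$-dense since the ambient cube is, hence co-small in $E$). So the real target is: for every point $x \in E$, the set $E \setminus \{x\}$ should fail to be dense — equivalently, $x$ should be isolated in... no: rather, we need that no dense subset omits a point, i.e. each singleton $\{x\}$ is \emph{not} nowhere dense is wrong too; submaximality of a crowded space is equivalent to: every subset is the union of an open set and a nowhere dense (in fact closed discrete) set, equivalently the complement of every dense set is closed (and discrete). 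The cleanest reformulation to chase: $E$ is submaximal iff for each $x\in E$ and each basic open neighbourhood structure, $x$ is not in the closure of $E \setminus \{x\}$ \emph{within} any set that would otherwise be dense — concretely, enumerate all pairs $(x, U)$ where $x \in E$ and $U$ is a basic open set of the cube with $x \in U$, and ensure that $E \cap U$ contains some point $y \ne x$ which is ``locally the only one'' — hmm. Let me state the actual mechanism: we enumerate the $2^\kappa$ coordinates and, using coordinate $\alpha$, we ``protect'' the $\alpha$-th candidate dense-complement; a subset $D \subseteq E$ with $x \notin D$ is dense iff for every basic open $U \ni x$ we have $D \cap U \ne \emptyset$, so to spoil this for \emph{all} such $D$ at once it suffices to make $x$ have a neighbourhood $U$ with $E \cap U = \{x\}$ — but $E$ is crowded, contradiction! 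So submaximality here is more subtle: we cannot isolate points. Instead, submaximal crowded means: every dense $D$ is open, i.e. its complement $N = E \setminus D$ is closed; $N$ is a dense-co-set so $N$ is boundary (nowhere dense). So we need: every nowhere dense subset of $E$ is closed. Equivalently: the closure of every nowhere dense set is nowhere dense AND closed — automatic — the real content is that nowhere dense $\Rightarrow$ closed, i.e. no nowhere dense set has a limit point outside itself, i.e. every \emph{boundary} set is closed.

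So the recursion should ensure: for every point $x \in E$ and every ``small'' (nowhere dense, i.e. meeting no basic neighbourhood of $x$ in a set that is again dense-in-that-neighbourhood) subset... The standard trick is a \emph{bookkeeping over pairs} $(x, \mathcal{U})$ where $x \in A$ and $\mathcal{U}$ is a basic open set of $Q^\kappa$ (a finite-support restriction) together with a finite partial condition on the future $2^\kappa$-coordinates; since $|A| = \kappa$ and $Q$ has a dense set of size $\le \kappa$, there are only $\kappa \cdot \kappa^{<\omega} \cdot \dots$ — wait, there are $2^\kappa$ basic open sets in $Q^{2^\kappa}$, so the list of relevant tasks has length $2^\kappa$, matching the number of available coordinates. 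At step $\alpha$, having already committed $g_\beta$ for $\beta < \alpha$, we look at the $\alpha$-th task ``$(x, U)$ with $x \in \overline{(E \setminus \{x\}) \cap U}$'' and, if it is still live, we use coordinate $\alpha$ to separate $x$ from $(E \setminus \{x\}) \cap U$ by setting $g_\alpha(x)$ to a value in $Q$ different from $g_\alpha(y)$ for the relevant points $y$ — but there may be $\kappa$ such $y$ and only finitely many values available if $Q$ is small... The resolution (this is where Proposition~\ref{P_HMP}'s conclusion that density sets can be taken with finite range, Theorem~\ref{T_HMP}(2), is the wrong one — here we need $|Q|>1$ only) is to instead, at step $\alpha$, pick ONE point $x_\alpha \in A$ and ONE value, ensuring $x_\alpha$ eventually gets a neighbourhood in $E$ that contains no other point of the specific nowhere dense set we are killing; but we must kill $2^\kappa$ nowhere dense sets, one per coordinate, which is exactly feasible. \textbf{The main obstacle} I anticipate is precisely this bookkeeping: arranging a single transfinite enumeration so that (a) each coordinate $\alpha$ is used to defeat one potential ``bad'' dense set (or one bad point-set pair), (b) the commitments made at stage $\alpha$ never conflict with those at earlier stages — which works because each stage fixes the $\alpha$-th coordinate of \emph{all} points of $A$ but only constrains that one new coordinate, so stages are automatically independent — and (c) after $2^\kappa$ steps every nowhere dense subset $N$ of $E$ has been handled, i.e. is closed. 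Point (c) requires showing that an arbitrary nowhere dense $N \subseteq E$ and a limit point $x$ of it were captured by \emph{some} task $\alpha$; this follows because the closure relation $x \in \overline{N}$ in the cube is witnessed by: for every basic open $U \ni x$, $N \cap U \ne \emptyset$, and the ``reason'' $N$ is nowhere dense gives, for a suitable $U$, that $N \cap U$ avoids a basic-open subneighbourhood — and that configuration is one of our $2^\kappa$ enumerated tasks, at which point we chose the coordinate value to put $x$ outside $\overline{N}$, contradiction. Making (c) precise — identifying the right countable amount of data that pins down a ``task'' so that there are exactly $2^\kappa$ of them and every nowhere-dense-with-limit-point situation reduces to one — is the technical heart of the argument; once the enumeration is set up correctly, the recursion itself and the verification of $\kappa$-density (inherited from $A$ on the first $\kappa$ coordinates, untouched throughout) and of $T_1$/crowdedness are routine.
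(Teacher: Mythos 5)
Your high-level plan (realize the subspace as the graph of functions $g_\alpha:A\to L$ over a $\kappa$-dense $A\subseteq Q^\kappa$, and determine the $2^\kappa$ extra coordinates by a transfinite recursion of that length) is the same as the paper's, but the proposal has two genuine gaps. First, the assertion that $\kappa$-density of the final graph is ``automatic, since changing values in the $2^\kappa$-block does not affect $\kappa$-density coming from the $\kappa$-block'' is false: a basic open box in $L^{\kappa\oplus 2^\kappa}$ restricts finitely many coordinates of the $2^\kappa$-block as well, so density of the graph depends on the values $g_\gamma(x)$ and must be re-verified after every modification. In the paper this takes two separate claims: an induction showing $A\in\mathcal{D}_\alpha$ at every stage (with a nontrivial limit-stage step exploiting finite supports), and a monotonicity lemma $\mathcal{D}_\beta\subseteq\mathcal{D}_\alpha$ for $\alpha\le\beta$ proved via the $\pi$-base of boxes avoiding a reserved point $r$. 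Second, you explicitly leave ``the technical heart'' --- what exactly the $2^\kappa$ tasks are and how one coordinate defeats one task --- unresolved, and the mechanism you gesture at (use coordinate $\alpha$ to separate a point $x$ from a nowhere dense set $N$, i.e.\ from $\kappa$ many other points) runs into the obstacle you yourself flag: in a general $T_1$ space one coordinate value cannot be separated by an open set from $\kappa$ many prescribed values, and declaring that you will instead pick ``ONE point and ONE value'' does not remove the need to exclude all of $N$ from a subbasic neighbourhood.

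The paper's actual mechanism is different and sidesteps both problems. Fix $r\in L$, put $Q:=L\setminus\{r\}$ (open, since $L$ is $T_1$), keep all $f_\alpha$ mapping into $Q$, and enumerate not point--neighbourhood tasks but all $2^\kappa$ subsets $\{A_\alpha:\alpha<2^\kappa\}$ of $A$. At stage $\alpha$, if the current image of $A_\alpha$ is $\kappa$-dense, set $g_\alpha:=f_\alpha$ on $A_\alpha$ and $g_\alpha:=r$ on $A\setminus A_\alpha$; otherwise leave $g_\alpha:=f_\alpha$. Thus every subset of $A$ whose image is dense at its own stage becomes exactly $\pi_\alpha^{-1}[Q]$, hence open in the final space --- one proves that every dense set \emph{is open} directly, rather than that every nowhere dense set is closed. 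The monotonicity lemma then guarantees that a set which is $\kappa$-dense in the final space was already $\kappa$-dense at stage $\alpha$, so it really was handled; and Proposition~\ref{P_submax} upgrades ``every $\kappa$-dense subset is open'' to submaximality. The reserved point $r$ is the ingredient your sketch is missing: it converts the separation problem into membership in the fixed open set $Q$, uniformly for all $T_1$ spaces $L$.
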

\begin{proof}
First of all, we can suppose that $L$ is crowded (hence infinite), because otherwise we just redesignate $L$ to be $L^\omega$.
Choose any $r \in L$ and denote $Q := L \setminus \{r\}$.
Let us take a set $A \subseteq Q^\kappa$ and functions $f_\alpha:A\to Q$ for all $\alpha<2^\kappa$ with the properties from Proposition \ref{P_HMP}.
Also, let $\{A_\alpha : \alpha<2^\kappa\}$ be a family of all subsets of $A$.

Now let us define functions $g_\alpha : A \to L$ for all $\alpha<2^\kappa$ using recursion by $\alpha$.
Suppose $g_\beta$ are defined for all $\beta<\alpha$.
Denote by $F_\alpha$ the function $\nabla_{\beta<2^\kappa} h_\beta$, where $h_\beta=g_\beta$ if $\beta<\alpha$ and $h_\beta=f_\beta$ if $\alpha\leq\beta<2^\kappa$.

If the set $F_\alpha[A_\alpha]$ is $\kappa$-dense in $L^{\kappa\oplus{2^\kappa}}$, we define $g_\alpha(x) := f_\alpha(x)$ for $x \in A_\alpha$ and $g_\alpha(x) := r$ for $x \in A \setminus A_\alpha$. Otherwise, if the set $F_\alpha[A_\alpha]$ is not $\kappa$-dense in $Q^{\kappa\oplus{2^\kappa}}$, we define $g_\alpha := f_\alpha$.

The functions $g_\alpha$ are constructed. Define $F = F_{2^\kappa} := \nabla_{\alpha<2^\kappa}g_\alpha$. We shall prove that the subspace $X := F[A]$ is the required one (up to a homeomorphism between $L^{2^\kappa}$ and $L^{\kappa\oplus 2^\kappa}$).
We need the following notation:

\begin{itemize}

\item $\mathcal{W}$ is the standard base of $L^{\kappa\oplus{2^\kappa}}$, i.e. a family of all nonempty sets $W=V\times\prod_{\gamma<2^\kappa}U_\gamma$, where $V$ is open in $L^\kappa$, all $U_\gamma$ are open in $L$ and the set $\Gamma_W := \{\gamma<2^\kappa : U_\gamma \ne L\}$ is finite;

\item $\mathcal{W^-}$ is the family of all $W=V\times\prod_{\gamma<2^\kappa}U_\gamma\in\mathcal{W}$ such that $r \notin U_\gamma$ for all $\gamma\in\Gamma_W$.
Clearly, this is a $\pi$-base of $L^{\kappa\oplus{2^\kappa}}$;

\item for every $\alpha \leq 2^\kappa$ we denote by $\mathcal{D}_\alpha$ the family of all $S \subseteq A$ such that the set $F_\alpha[S]$ is $\kappa$-dense in $L^{\kappa\oplus{2^\kappa}}$.

\end{itemize}

We finish the proof in the following sequence of claims.

\begin{itemize}

\item[I.] For every $x \in A$, $\alpha \leq 2^\kappa$ and $W = V\times\prod_{\gamma<2^\kappa}U_\gamma \in \mathcal{W}$ the following conditions are equivalent:

\begin{itemize}

\item[(1)] $F_\alpha(x) \in W$;

\item[(2)] $x \in V$, $g_\gamma(x) \in U_\gamma$ if $\gamma < \alpha$ and $f_\gamma(x) \in U_\gamma$ if $\alpha \leq \gamma < 2^\kappa$.

\end{itemize}

\item[II.] $X$ is $\kappa$-dense in $L^{\kappa\oplus{2^\kappa}}$ (or, in other words, $A \in \mathcal{D}_{2^\kappa}$).

We prove that $A \in \mathcal{D}_\alpha$ for all $\alpha \leq 2^\kappa$ by induction on $\alpha$.
The base $\alpha=0$ holds trivially by the choice of $A$ and $f_\alpha$.

Now, if $A \in \mathcal{D}_\alpha$, then there is a set $S \subseteq A$ (which can be $A_\alpha$ or $A$ depending on the case in the definition of $g_\alpha$) such that $F_\alpha[S]$ is $\kappa$-dense in $L^{\kappa\oplus{2^\kappa}}$ and $g_\alpha|_S = f_\alpha|_S$ (hence $F_{\alpha+1}|_S = F_\alpha|_S$). Consequently, $A \in \mathcal{D}_{\alpha+1}$.

Finally, let $\alpha$ be limited and let $A \in \mathcal{D}_\beta$ for all $\beta<\alpha$.
Take any $W=V\times\prod_{\gamma<2^\kappa}U_\gamma \in \mathcal{W}$.
Denote $\beta := \max(\Gamma_W \cap \alpha)+1$ (we define $\max\emptyset := 0$).
Clearly, $\beta<\alpha$, so $|F_\beta[A]\cap W|=\kappa$ by the inductive assumption.
Let us show that $F_\alpha[A]\cap W = F_\beta[A]\cap W$,
i.e. that the conditions $F_\alpha(x) \in W$ and $F_\beta(x) \in W$ are equivalent for every $x \in A$.
By claim I, we must only prove that $g_\gamma(x) \in U_\gamma$ if and only if $f_\gamma(x) \in U_\gamma$ for $\beta \leq \gamma < \alpha$.
But for $\beta \leq \gamma < \alpha$ we have $U_\gamma = L$ by the choice of $\beta$,
so both conditions $g_\gamma(x) \in U_\gamma$ and $f_\gamma(x) \in U_\gamma$ hold.
So, $|F_\alpha[A]\cap W| = |F_\beta[A]\cap W| = \kappa$, hence $A \in \mathcal{D}_\alpha$.

\item[III.] If $\alpha\leq\beta\leq2^\kappa$, then $\mathcal{D}_\alpha \supseteq \mathcal{D}_\beta$.

Let us prove that for any $S \subseteq A$ and $W=V\times\prod_{\gamma<2^\kappa}U_\gamma \in \mathcal{W^-}$ we have $F_\beta[S] \cap W \subseteq F_\alpha[S] \cap W$.
We take any point $F_\beta(x) \in F_\beta[S] \cap W$.
Clearly, $x\in S\cap V$, $g_\gamma(x) \in U_\gamma$ for $\gamma < \beta$ and $f_\gamma(x) \in U_\gamma$ for $\beta \leq \gamma < 2^\kappa$.
Since $W \in \mathcal{W}^-$, we have $r \notin U_\gamma$ for all $\gamma \in \Gamma_W$.
This means that for all $\gamma \in \Gamma_W\cap\beta$ we have $g_\gamma(x) \ne r$, so $g_\gamma(x) = f_\gamma(x)$.
So, $F_\alpha(x) \in W$ by the condition (2) in claim I.

Considering that $\mathcal{W}^-$ is a $\pi$-base of $L^{\kappa\oplus{2^\kappa}}$, it follows that if $F_\beta[S]$ is $\kappa$-dense in $L^{\kappa\oplus{2^\kappa}}$, then $F_\alpha[S]$ is $\kappa$-dense in $L^{\kappa\oplus{2^\kappa}}$,
i.e. $\mathcal{D}_\alpha \supseteq \mathcal{D}_\beta$.

\item[IV.] Every $\kappa$-dense subset of $X$ is open in $X$.

Let $D$ be a $\kappa$-dense set in $X$ (hence in $L^{\kappa\oplus{2^\kappa}}$).
Take $\alpha<2^\kappa$ such that $D=F[A_\alpha]$.
So, $A_\alpha \in \mathcal{D}_{2^\kappa}$.
Hence, by claim III we have $A_\alpha \in \mathcal{D}_\alpha$.
By the definition of $g_\alpha$, it means that $A_\alpha = g_\alpha^{-1}[Q]$.
Denote by $\pi_\alpha : X \to L$ the function defined by the rule $\pi_\alpha(F(x)) := g_\alpha(x)$ for all $x \in A$
(so it is a projection of $X$ onto the coordinate $\alpha$).
Clearly, the function $\pi_\alpha$ is continuous and $D = \pi_\alpha^{-1}[Q]$, so $D$ is open in $X$.

\end{itemize}

So, $X$ is submaximal by Proposition \ref{P_submax}.
\end{proof}

\section{A Tychonoff pseudocompact connected $\frak{c}^+$-irresolvable space}

\begin{definition}
Let $A \subseteq [0,1]^\lambda$. Denote by $I{^1}(A)$ the union of all line segments with ends in $A$. For every natural $n$ we denote $I^{n+1}(A)=I^1(I^n(A))$ and $I^\omega(A)=\bigcup_{n\in\omega}I^n(A)$.
\end{definition}

\begin{proposition}
If a set $A$ is dense in $[0,1]^\lambda$ and $I^\omega(A) \subseteq B \subseteq [0,1]^\lambda$, then the subspace $B$ is globally and locally connected.
\end{proposition}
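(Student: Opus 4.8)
The plan is to extract one key lemma and apply it twice. The lemma: \emph{for every dense set $D\subseteq[0,1]^\lambda$, the set $I^\omega(D)$ is dense in $[0,1]^\lambda$ and (path-)connected.} Density is immediate, since $D\subseteq I^1(D)\subseteq I^\omega(D)$. For connectedness I would fix a point $a\in D$ and observe that, by monotonicity $I^n(D)\subseteq I^{n+1}(D)$, every $x\in I^\omega(D)$ lies in some $I^n(D)$, which also contains $a$; hence the line segment joining $a$ to $x$ is contained in $I^{n+1}(D)\subseteq I^\omega(D)$. Since a line segment is a continuous image of $[0,1]$, it is connected, so $I^\omega(D)$ is a union of connected sets sharing the common point $a$, hence connected. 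Applying the lemma with $D=A$: $I^\omega(A)$ is a dense connected subset of $B$ (because $I^\omega(A)\subseteq B\subseteq[0,1]^\lambda$), and a space containing a dense connected subspace is connected; so $B$ is (globally) connected.

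For local connectedness I would exhibit a base of $B$ consisting of connected sets. First note that $[0,1]^\lambda$ has a base $\mathcal{B}$ whose members have the form $W=\prod_{\gamma<\lambda}U_\gamma$, where each $U_\gamma$ is a relatively open subinterval of $[0,1]$ and $U_\gamma=[0,1]$ for all but finitely many $\gamma$; such a $W$ is \emph{convex} in $[0,1]^\lambda$, being a product of convex sets. Fix $W\in\mathcal{B}$ and put $D_W:=W\cap A$, which is nonempty and dense in $W$ because $A$ is dense and $W$ is open. Convexity of $W$ guarantees that every line segment with endpoints in $W$ stays in $W$, whence $I^\omega(D_W)\subseteq W$; and of course $I^\omega(D_W)\subseteq I^\omega(A)\subseteq B$, so $I^\omega(D_W)\subseteq W\cap B$. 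By the lemma (applied to $D_W$ dense in $W$), $I^\omega(D_W)$ is connected and dense in $W$, hence dense in $W\cap B$; therefore $W\cap B$ is connected. Thus $\{W\cap B : W\in\mathcal{B}\}$ is a base of $B$ consisting of connected sets, so $B$ is locally connected.

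I expect the whole argument to be essentially routine; the one point that requires care is the choice of basic neighbourhoods. A general basic open box of $[0,1]^\lambda$ need not be convex, so segments joining two of its points could escape it; insisting that the factor sets $U_\gamma$ be intervals (hence convex) is precisely what keeps $I^\omega(W\cap A)$ inside $W\cap B$ and makes the localization of the lemma work. Everything else — monotonicity of the operators $I^n$, connectedness of a line segment, and the fact that a subspace with a dense connected subspace is itself connected — is immediate.
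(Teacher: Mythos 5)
Your proof is correct, and since the paper states this proposition without proof, your argument is precisely the one it leaves implicit: density of $I^\omega(D)\supseteq D$, connectedness via segments through a fixed $a\in D$ using $I^n(D)\subseteq I^{n+1}(D)$, and then localization to a base of convex boxes. The one genuinely delicate point --- that the factors $U_\gamma$ must be intervals so that the basic sets are convex and $I^\omega(W\cap A)$ stays inside $W$ --- is exactly the point you identify and handle.
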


\begin{proposition}
For every $A \subseteq [0,1]^\lambda$ there is a countably compact subspace $B \subseteq [0,1]^\lambda$ such that $B \supseteq A$ and $|B| \leq |A|^\omega$.
\end{proposition}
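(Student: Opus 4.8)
The plan is to build $B$ by closing $A$ up under the operation ``adjoin one accumulation point of each countably infinite subset'', iterated $\omega_1$ times inside $[0,1]^\lambda$. The only ingredient from outside is the fact that $[0,1]^\lambda$, being compact, is countably compact in the strong form: every infinite $S\subseteq[0,1]^\lambda$ has an $\omega$-accumulation point, i.e.\ a point $p$ such that $U\cap S$ is infinite for every neighbourhood $U$ of $p$. Recall also the standard fact that a space $B$ is countably compact precisely when every countably infinite subset of $B$ has an $\omega$-accumulation point lying in $B$. So it suffices to produce $B\supseteq A$ with $|B|\le|A|^\omega$ enjoying this last property.

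First I would define, by recursion on $\alpha\le\omega_1$, an increasing chain $\langle B_\alpha : \alpha\le\omega_1\rangle$ of subsets of $[0,1]^\lambda$: put $B_0:=A$; at a successor stage let $B_{\alpha+1}$ consist of $B_\alpha$ together with one chosen $\omega$-accumulation point $p_S\in[0,1]^\lambda$ for every countably infinite $S\subseteq B_\alpha$; take unions at limit stages; and finally set $B:=B_{\omega_1}$, so that $A=B_0\subseteq B$. (One may harmlessly assume $A$ is infinite, a finite subspace of $[0,1]^\lambda$ being already compact.)

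Next I would check the size bound $|B_\alpha|\le|A|^\omega$ by induction: at a successor stage one adjoins at most $|B_\alpha|^\omega\le\bigl(|A|^\omega\bigr)^\omega=|A|^\omega$ points, and at a limit stage $\delta$ one gets $|B_\delta|\le|\delta|\cdot|A|^\omega\le\omega_1\cdot|A|^\omega=|A|^\omega$, where the estimate $\omega_1\le\frak{c}\le|A|^\omega$ is used. Then I would verify countable compactness of $B$: if $C\subseteq B=\bigcup_{\alpha<\omega_1}B_\alpha$ is countably infinite, then because $\mathrm{cf}(\omega_1)>\omega$ there is $\alpha<\omega_1$ with $C\subseteq B_\alpha$, whence $p_C\in B_{\alpha+1}\subseteq B$ is an $\omega$-accumulation point of $C$; and an $\omega$-accumulation point of $C$ computed in $[0,1]^\lambda$ remains one in the subspace $B$ since $C\subseteq B$.

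The routine pieces are the cardinal arithmetic and the passage between accumulation points in $[0,1]^\lambda$ and in $B$. The one essential point --- and the reason the recursion must run all the way to $\omega_1$ --- is the appeal to $\mathrm{cf}(\omega_1)>\omega$: it guarantees that every countable subset of the final union already appears at some earlier stage $B_\alpha$, so that its accumulation point has been placed into $B$. If the iteration stopped short of $\omega_1$ the construction would fail to close off.
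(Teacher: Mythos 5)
Your proposal is correct and is essentially the paper's own argument: the paper likewise iterates, $\omega_1$ times, the operation of adjoining one chosen limit point for each countably infinite subset, and takes $B=L_{\omega_1}(A)$. You simply spell out the cardinality estimate and the $\mathrm{cf}(\omega_1)>\omega$ closure argument that the paper leaves as ``it is clear''.
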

\begin{proof}
For every infinite countable set $S \subseteq [0,1]^\lambda$ choose any limited point $x(S)$.
For every set $H \subseteq [0,1]^\lambda$ denote $L_1(H) := H \cup \{x(S) : S\subseteq H, |S|=\omega\}$, $L_{\alpha+1}(H) := L_1(L_\alpha(H))$ for all ordinals $\alpha$ and $L_\gamma(H) := \bigcup_{\alpha<\gamma} L_\alpha(H)$ for all limited ordinals $\gamma$.
It is clear that $B := L_{\omega_1}(A)$ is the required subspace.
\end{proof}

\begin{theorem}\label{T_Tych}
If $\frak{c}\leq\kappa=\kappa^\omega<\lambda$ and $2^\kappa=2^\lambda=\mu$, then there is a $\kappa^+$-irresolvable globally and locally connected pseudocompact space $X$ such that $|X|=\Delta(X)=\lambda$ and $X$ is homeomorphic to a dense subset of $[0,1]^\mu$.
\end{theorem}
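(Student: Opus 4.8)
The plan is to carry out the construction sketched in the introduction inside one fixed copy of $[0,1]^\mu$, which is legitimate because $2^\kappa=2^\lambda=\mu$: I shall produce a ``large'' dense irresolvable subspace $E\subseteq[0,1]^\mu$ with $|E|=\Delta(E)=\lambda$ and a ``small'' dense connected pseudocompact subspace $C\subseteq[0,1]^\mu$ with $|C|=\kappa$, and then take $X:=C\cup E$.

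For $E$, I apply Theorem~\ref{T_submax} with $L=[0,1]$ and the cardinal $\lambda$ in place of $\kappa$: since $[0,1]$ is $T_1$ with more than one point and $d([0,1])=\omega\le\lambda$, this gives a submaximal $\lambda$-dense subspace of cardinality $\lambda$ in $[0,1]^{2^\lambda}$, which, after the coordinate identification used in that theorem and using $2^\lambda=\mu$, I regard as a subspace $E$ of $[0,1]^\mu$. Since $E$ is $\lambda$-dense in $[0,1]^\mu$ and $|E|=\lambda$, we get $\Delta(E)=\lambda$, and in particular $E$ is dense. For $C$, I first use the Hewitt--Marczewski--Pondiczery theorem (Theorem~\ref{T_HMP}, with $L=[0,1]$) to fix a dense set $A\subseteq[0,1]^{2^\kappa}=[0,1]^\mu$ with $|A|=\kappa$. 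As each segment has $\frak{c}\le\kappa$ points and $\kappa\cdot\kappa=\kappa$, an easy induction gives $|I^n(A)|\le\kappa$ for every $n$, hence $|I^\omega(A)|=\kappa$. Applying the proposition on countably compact extensions to $I^\omega(A)$ inside $[0,1]^\mu$ yields a countably compact $B\subseteq[0,1]^\mu$ with $I^\omega(A)\subseteq B$ and $|B|\le|I^\omega(A)|^\omega=\kappa^\omega=\kappa$; put $C:=B$. Then $|C|=\kappa$, $C\supseteq A$ is dense, $C$ is countably compact hence pseudocompact, and by the proposition on local connectedness (applied to this dense $A$ with $I^\omega(A)\subseteq C\subseteq[0,1]^\mu$) $C$ is globally and locally connected.

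It remains to verify that $X:=C\cup E$ works. Clearly $|X|=\kappa+\lambda=\lambda$, and since $E$ is $\lambda$-dense in $[0,1]^\mu$ every nonempty open $U\subseteq X$ has $|U|\ge|U\cap E|\ge\lambda$, so $\Delta(X)=\lambda$; also $X$ is dense in $[0,1]^\mu$. The dense subspace $C$ of $X$ is pseudocompact, so $X$ is pseudocompact; and $I^\omega(A)\subseteq C\subseteq X\subseteq[0,1]^\mu$ with $A$ dense, so by the proposition on local connectedness $X$ is globally and locally connected. Finally, suppose toward a contradiction that $\{D_i:i<\kappa^+\}$ were pairwise disjoint dense subsets of $X$. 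For each $i$ with $D_i\cap C\ne\emptyset$ choose a point of $D_i\cap C$; this defines an injection of $\{i:D_i\cap C\ne\emptyset\}$ into $C$, so that set has size $\le|C|=\kappa<\kappa^+$, and therefore there are indices $i\ne j$ with $D_i,D_j\subseteq X\setminus C\subseteq E$. Each of $D_i,D_j$ is dense in $X$, hence dense in the subspace $E$, and they are disjoint --- contradicting that the submaximal, hence irresolvable, space $E$ has no two disjoint dense subsets. Thus $X$ is not $\kappa^+$-resolvable; together with $\Delta(X)=\lambda\ge\kappa^+$ (here $\lambda>\kappa$) this means $X$ is $\kappa^+$-irresolvable, and $X$ is a dense subset of $[0,1]^\mu$, so all assertions follow.

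The proof is essentially an assembly of results already available, and I expect no serious obstacle; the one genuinely new ingredient is the counting argument in the last step, where the smallness of $C$ (exactly $\kappa<\kappa^+$) forces all but at most $\kappa$ members of a hypothetical family of $\kappa^+$ disjoint dense sets into $E$, after which irresolvability of $E$ finishes it. The points needing a little care are purely bookkeeping: identifying the cubes $[0,1]^{2^\kappa}$, $[0,1]^{2^\lambda}$ and the $L^{\kappa\oplus 2^\lambda}$ arising from Theorem~\ref{T_submax} with one fixed copy of $[0,1]^\mu$ (possible since $2^\kappa=2^\lambda=\mu=\kappa+2^\lambda$), and the cardinal arithmetic $|I^\omega(A)|=\kappa$ and $\kappa^\omega=\kappa$, which is precisely where the hypotheses $\frak{c}\le\kappa=\kappa^\omega$ enter.
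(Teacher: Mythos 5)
Your proposal is correct and follows essentially the same route as the paper's (very terse) proof: take a dense $\kappa$-sized set $A$ from Theorem~\ref{T_HMP}, enlarge $I^\omega(A)$ to a countably compact $C$ of size $\kappa^\omega=\kappa$, take for $E$ a $\lambda$-dense submaximal (hence irresolvable) subspace of size $\lambda$ from Theorem~\ref{T_submax}, and set $X=C\cup E$. The verification details you supply --- the cardinality count forcing two of any $\kappa^+$ disjoint dense sets into $E$, and the use of the two propositions of Section~4 for connectedness and pseudocompactness --- are exactly the ones the paper leaves implicit.
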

\begin{proof}
By Theorem \ref{T_HMP}, there is a dense subset $A$ of $[0,1]^\mu$ such that $|A|=\kappa$.
Choose any countably compact subspace $C \supseteq I^\omega(A)$ in $[0,1]^\mu$ such that $|C|=\kappa$.
Denote $E$ any $\lambda$-dense irresolvable subspace of cardinality $\lambda$ in $[0,1]^\mu$.
The subspace $X := C \cup E$ is the required one.
\end{proof}

\begin{corollary}
ZFC does not prove the hypothesis ``If a Tychonoff pseudocompact space $X$ is globally and locally connected and $\Delta(X)>\frak{c}$, then $X$ is $\frak{c}^+$-resolvable".
\end{corollary}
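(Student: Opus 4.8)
The plan is to derive the Corollary directly from Theorem~\ref{T_Tych} by exhibiting a single model of ZFC in which its hypotheses are met with $\kappa=\mathfrak{c}$. Saying that ``ZFC does not prove'' the displayed implication is exactly saying that its negation is consistent with ZFC, i.e. that in some model of ZFC there is a Tychonoff pseudocompact globally and locally connected space $X$ with $\Delta(X)>\mathfrak{c}$ that is not $\mathfrak{c}^+$-resolvable. So it suffices to arrange, consistently, the hypotheses of Theorem~\ref{T_Tych} with $\kappa=\mathfrak{c}$.

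First I would record the ZFC-valid arithmetic $\mathfrak{c}^\omega=(2^\omega)^\omega=2^\omega=\mathfrak{c}$, so that the choice $\kappa:=\mathfrak{c}$ automatically satisfies $\kappa=\kappa^\omega$; moreover, since Theorem~\ref{T_Tych} also requires $\lambda>\kappa\ge\mathfrak{c}$, the space it produces automatically has $\Delta(X)=\lambda>\mathfrak{c}$. Hence the only extra ingredient needed is a cardinal $\lambda>\mathfrak{c}$ with $2^{\mathfrak{c}}=2^{\lambda}$. Granting that, Theorem~\ref{T_Tych} (applied with this $\kappa,\lambda$ and $\mu:=2^\kappa$) gives a $\mathfrak{c}^+$-irresolvable globally and locally connected pseudocompact space $X$ with $|X|=\Delta(X)=\lambda$ embedding as a dense subspace of $[0,1]^\mu$; in particular $X$ is Tychonoff, $\Delta(X)=\lambda>\mathfrak{c}$, and $X$ is not $\mathfrak{c}^+$-resolvable (using $\kappa^+=\mathfrak{c}^+$). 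This is precisely a counterexample to the hypothesis, so the Corollary follows.

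It therefore remains to observe that ``$2^{\mathfrak{c}}=2^{\lambda}$ for some $\lambda>\mathfrak{c}$'' is consistent with ZFC, which is a well-known consequence of elementary forcing; I would either cite it or argue as follows. Starting from a model of GCH (so $\mathfrak{c}=\omega_1$), force with $\mathrm{Add}(\omega_1,\omega_3)$, the poset of partial functions from $\omega_3\times\omega_1$ to $2$ of size $<\omega_1$. This poset is $\omega_1$-closed, hence adds no new $\omega$-sequences of ordinals (so $\mathfrak{c}=\omega_1$ and $\omega_1^\omega=\omega_1$ persist), and it is $\omega_2$-cc by the ground-model CH, hence preserves all cardinals. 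In the extension $2^{\omega_1}=\omega_3$, while a standard nice-name count, using the $\omega_2$-cc together with GCH in the ground model, bounds $2^{\omega_2}$ by $\omega_3$; since trivially $2^{\omega_2}\ge 2^{\omega_1}$, we obtain $2^{\omega_1}=2^{\omega_2}=\omega_3$. Taking $\kappa=\omega_1=\mathfrak{c}$, $\lambda=\omega_2$, $\mu=\omega_3$ in the previous paragraph finishes the argument.

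The one genuinely technical point, and the place I would be most careful, is the cardinal arithmetic of this extension: one must verify both that $\omega_1$-closedness really prevents $\kappa=\kappa^\omega$ from failing (no new countable sequences appear) and that the $\omega_2$-cc together with ground-model GCH keeps $2^{\omega_2}$ from overshooting $2^{\omega_1}$. Everything else is a direct invocation of Theorem~\ref{T_Tych}, so I expect no obstacle there.
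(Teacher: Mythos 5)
Your proposal is correct and takes essentially the same route as the paper: both reduce the corollary to Theorem~\ref{T_Tych} with $\kappa=\mathfrak{c}$ (using the ZFC fact $\mathfrak{c}^\omega=\mathfrak{c}$) and $\lambda=\mathfrak{c}^+$ in a model where $2^{\mathfrak{c}}=2^{\mathfrak{c}^+}$. The only difference is that the paper simply says ``take any model with $2^{(\mathfrak{c}^+)}=2^{\mathfrak{c}}$'' while you supply an explicit (and correct) forcing construction of such a model; that extra detail is fine but not needed.
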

\begin{proof}
We take any model with $2^{(\frak{c}^+)}=2^\frak{c}$ and apply Theorem \ref{T_Tych} to $\kappa= \frak{c}$ and $\lambda=\frak{c}^+$.
\end{proof}

\section{A regular pseudocompact connected $\omega_2$-irresolvable space}

The following statement can be found in \cite[Theorem 7(i,iii)]{CW}.

\begin{theorem}\label{T_CW}
For every uncountable cardinal $\kappa$ there is a regular space $L$ such that $|L|=\kappa$ and all continuous real-valued functions on $L$ are constant.
\end{theorem}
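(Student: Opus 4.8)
The plan is to build the required space $L$ by Hewitt's classical ``condensed corkscrew'' construction, with attention paid to keeping the cardinality equal to $\kappa$. The single building block needed is a \emph{corkscrew}: a regular $T_1$ space $K$ carrying two points $p\ne q$ that no continuous function $f:K\to\mathbb{R}$ can distinguish --- that is, $f(p)=f(q)$ for every such $f$. Such a space exists classically, witnessing the failure of complete regularity; the only new ingredient is to produce one of cardinality at most $\kappa$, which is a mild refinement of the textbook corkscrew when $\kappa$ is small. Given $K$, the idea is to ``short-circuit'' every pair of points: starting from a regular $T_1$ space with at least two points, one repeatedly glues a fresh copy of $K$ across a chosen pair $\{a,b\}$ of the current space $Y$ --- forming $Y\sqcup(K\setminus\{p,q\})$ and re-embedding $K$ with $p\mapsto a$ and $q\mapsto b$ --- iterating transfinitely until every pair that ever occurs has been treated, and taking unions at limit stages.

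With the construction arranged so that each stage sits as a subspace inside all later stages, correctness is the easy part. A continuous $g:L\to\mathbb{R}$ restricts to a continuous function on each glued copy of $K$, so $g(a)=g(p)=g(q)=g(b)$ for every treated pair $\{a,b\}$; since every pair is eventually treated, $g$ is constant. Because each stage is a subspace of its successors, both $T_1$-ness and the relation ``$g(a)=g(b)$ holds for all continuous $g$'' on any already-treated pair pass upward, and hence survive the limit stages. For the cardinality, one fixes the underlying set to be $\kappa$, draws the new points introduced by each glued copy of $K$ from the as-yet-unused part of $\kappa$, and lists the pairs to be treated by a standard bookkeeping of length $\kappa$: each gluing adds $|K|\le\kappa$ points, so at most $\kappa$ points are ever added, and padding the starting space with a discrete set of size $\kappa$ then gives $|L|=\kappa$.

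The hard part will be keeping everything regular. In one gluing step one must choose neighborhood bases at the identified points $a$ and $b$ so that the $Y$-side and the $K$-side amalgamate into a regular $T_1$ topology in which both $Y$ and $K$ are subspaces; and, more delicately, at limit stages one must verify that the natural coherent topology on the union of the chain is again regular --- all while holding the building block, and hence the union, down to cardinality $\kappa$. Balancing regularity against this economy is precisely what is carried out in \cite[Theorem~7]{CW}, which is why the statement is quoted from there rather than proved here.
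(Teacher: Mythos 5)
The paper offers no proof of this statement: it is imported verbatim from \cite[Theorem 7(i,iii)]{CW}, exactly as its placement and citation indicate. Your proposal likewise defers all of the substantive content --- the regular amalgamation at the glued points and the limit-stage regularity, together with the cardinality bookkeeping --- to that same reference, so it matches the paper's treatment; the corkscrew/gluing outline you add is a reasonable gloss on the cited construction but does not constitute an independent proof.
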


It is obvious that any such $L$ is connected and pseudocompact. We construct our example in some power of $L$. But first we need to do some preperations. The complexity comparing to the previous section is that now we cannot obtain a connected subspace by connecting all pairs of points with line segments.

\begin{definition}
Let $X$ be a space and let $\lambda$ be a cardinal. We say that:

\begin{itemize}

\item a point $x \in X^\lambda$ is a {\it stair-point} (of {\it range} $n\in\omega$),
if the set $\{x(\alpha) : \alpha<\lambda\}=x[\lambda]$ is finite (is of cardinality $n$);

\item a family $\mathcal{E}$ of subsets of $X^\lambda$ is a {\it web} in $X^\lambda$ if all elements of $\mathcal{E}$ are homeomorphic to $X$ and for all $A,B \in \mathcal{E}$ there are sets $C_1, \ldots, C_n \in \mathcal{E}$ such that $C_1 = A$, $C_n = B$ and $C_k \cap C_{k+1} \ne \emptyset$ for all $k < n$.

\end{itemize}
\end{definition}

\begin{lemma}
Let $X$ be a space and let $\lambda$ be a cardinal. For any two stair-points $a,b \in X^\lambda$ there is a finite web $\mathcal{E}$ in $X^\lambda$ such that $a,b \in \bigcup\mathcal{E}$.
\end{lemma}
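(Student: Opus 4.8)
The plan is to connect $a$ and $b$ by a finite chain of copies of $X$ sitting inside $X^\lambda$, where each consecutive pair of copies shares a point. Write $a[\lambda]=\{a_1,\dots,a_p\}$ and $b[\lambda]=\{b_1,\dots,b_q\}$; both sets are finite since $a,b$ are stair-points. The basic building block is this: for any $x\in X^\lambda$, any coordinate value $c\in X$ appearing in $x[\lambda]$, and any fixed point $d\in X$, the set $A=\{y\in X^\lambda : y(\alpha)=x(\alpha)\text{ whenever }x(\alpha)\neq c\}$ is naturally homeomorphic to a power $X^{S}$ where $S=\{\alpha : x(\alpha)=c\}\neq\emptyset$; but we want a copy of $X$ itself, not of a higher power, so instead I take the \emph{diagonal} copy $D(x,c,d):=\{y\in X^\lambda : y(\alpha)=d\text{ for }\alpha\in S,\ y(\alpha)=x(\alpha)\text{ for }\alpha\notin S\text{ — allowing }y|_S\text{ to range diagonally over }X\}$. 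More precisely, for a nonempty $S\subseteq\lambda$ and a function $e:\lambda\setminus S\to X$, the set $\{y\in X^\lambda : y|_{\lambda\setminus S}=e,\ y|_S\text{ constant}\}$ is homeomorphic to $X$ via $t\mapsto(y$ with $y|_S\equiv t)$. Such a set is a legitimate element of a web, and it contains the two stair-points obtained by setting the $S$-constant equal to two chosen values.

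Next I would carry out a ``coordinate-value merging'' argument. Choose a common value $r\in X$ (any point of $X$). Starting from $a$, I walk through its finite list of distinct coordinate values $a_1,\dots,a_p$ one at a time: at step $i$, using the diagonal copy of $X$ whose varying block is $S_i=\{\alpha : \text{current point has value }a_i\text{ there}\}$ and whose fixed part agrees with the current point elsewhere, I move from the current stair-point to the stair-point in which that block has been recolored to $r$. After $p$ such steps I have reached the constant point $\bar r\in X^\lambda$ with all coordinates equal to $r$, and consecutive copies in this chain intersect (each shares the stair-point produced at the previous step). Symmetrically, I build a chain of diagonal copies from $b$ to the same constant point $\bar r$, recoloring $b_1,\dots,b_q$ to $r$ one block at a time. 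Concatenating the two chains gives a finite family $\mathcal E$ of sets, each homeomorphic to $X$, with consecutive members meeting, and with $a,b\in\bigcup\mathcal E$; finally I check the ``web'' connectivity condition, which is immediate since $\mathcal E$ is literally arranged as a chain $C_1,\dots,C_n$ with $C_k\cap C_{k+1}\neq\emptyset$ (the chain from $a$ to $\bar r$ followed by the reverse of the chain from $b$ to $\bar r$).

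The one point requiring a little care is verifying that consecutive diagonal copies in a single chain actually intersect. When I recolor the $a_i$-block to $r$, the endpoint stair-point $y$ has $y(\alpha)=r$ on the old $a_i$-positions and is unchanged elsewhere; the next copy is built relative to $y$ using the block of positions where $y$ takes the value $a_{i+1}$ — and crucially this block of $y$ equals the $a_{i+1}$-block of the \emph{original} $a$ only if $a_{i+1}\neq r$; if some $a_j$ already equals $r$, the blocks can merge, but this only helps (fewer steps). So I should first discard from the list $a_1,\dots,a_p$ any value equal to $r$, recolor the remaining ones, and note that each copy in the chain contains both the stair-point before recoloring and the stair-point after, so it meets its neighbor on each side. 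The same remark applies to the $b$-chain, and the two chains meet at $\bar r$. I expect this bookkeeping — tracking which coordinate-blocks merge as recoloring proceeds, and confirming nonempty overlaps throughout — to be the main (though entirely routine) obstacle; everything else is just the observation that a ``diagonal slice'' of $X^\lambda$ is a copy of $X$.
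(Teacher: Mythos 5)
Your proposal is correct and takes essentially the same route as the paper: both rest on the observation that a ``diagonal slice'' $\{y\in X^\lambda : y|_{\lambda\setminus S}=e,\ y|_S \text{ constant}\}$ (the paper's $N(x,\Gamma)$) is homeomorphic to $X$, and both walk each stair-point down to a constant point through a finite chain of such slices, each consecutive pair sharing the intermediate stair-point. The only (immaterial) difference is that the paper recolors one value-block into another \emph{existing} value at each step, reducing the range by one, and joins the two chains through the full diagonal $D$, whereas you recolor every block to a fixed target $r$ and let the chains meet at the constant point $\bar r$.
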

\begin{proof}
For any $x \in X^\lambda$, $\Gamma\subseteq\lambda$ and $p\in X$ denote by $f(x,\Gamma,p)$ the point $y\in X^\lambda$ such that
$y(\alpha)=x(\alpha)$ for $\alpha\notin\Gamma$ and $y(\alpha)=p$ for $\alpha\in\Gamma$.
Construct $N(x,\Gamma) := \{f(x,\Gamma,p) : p \in X\}$. Clearly, the subspace $N(x,\Gamma)$ is homeomorphic to $X$ if $\Gamma\ne\emptyset$.

Denote by $n$ the range of the stair-point $a$.
Now we define points $a_n, \ldots, a_1$ and sets $A_{n-1}, \ldots, A_1$ in the following manner.
Firstly, $a_n := a$.
Now, let $a_k$ be a stair-point of range $k>1$.
Choose any different $p_k, q_k \in a_k[\lambda]$ and denote $\Gamma_k := a_k^{-1}(p_k)$.
Define $A_{k-1} := N(a_k, \Gamma_k)$ and $a_{k-1} := f(a_k, \Gamma_k, q_k)$.
It is clear that $a_{k-1}$ is a stair-point of range $k-1$ and $A_{k-1} \supseteq \{a_k, a_{k-1}\}$.

Now we denote $m$ the range of the stair-point $b$ and choose points $b_m, \ldots, b_1$ and sets $B_{m-1}, \ldots, B_1$ in the same manner, so every $b_k$ is a stair-point of range $k$, the set $B_k$ is homeomorphic to $X$ and $B_{k-1} \supseteq \{b_k, b_{k-1}\}$.

Finally, denote by $D$ the diagonal of $X^\lambda$, which is also the set of all stair-points of range $1$.

It is clear that the family $\mathcal{E} := \{A_{n-1}, \ldots, A_1, D, B_1, \ldots, B_{m-1}\}$ is the required web.
\end{proof}

\begin{lemma}
For every space $X$ and any cardinal $\kappa\geq\omega$ there is a web $\mathcal{E}$ in $X^{2^\kappa}$ such that the set $\bigcup \mathcal{\mathcal{E}}$ is dense in $X^{2^\kappa}$ and $|\mathcal{E}| \leq \kappa$.
\end{lemma}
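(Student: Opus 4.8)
The plan is to use the improved Hewitt--Marczewski--Pondiczery theorem (Theorem \ref{T_HMP}) to produce a dense subset of $X^{2^\kappa}$ of size $\kappa$ all of whose points are stair-points, and then to glue these points together into a web by repeated use of the previous lemma; since each such gluing adds only finitely many sets and only $\kappa$ of them are performed, the cardinality bound survives.

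First I would dispose of the degenerate cases: if $|X|\le 1$ then $X^{2^\kappa}$ has at most one point, so $\mathcal{E}=\{X^{2^\kappa}\}$ (or $\mathcal{E}=\emptyset$ when $X=\emptyset$) works. So assume $|X|>1$. By Theorem \ref{T_HMP} there is a set $A\subseteq X^{2^\kappa}$ with $|A|=\kappa$ which is $\kappa$-dense, hence dense, in $X^{2^\kappa}$ and such that every $a\in A$ is a stair-point. Fix $a_0\in A$ and write $A=\{a_\xi:\xi<\kappa\}$. For each $\xi<\kappa$ apply the previous lemma to the stair-points $a_0$ and $a_\xi$, obtaining a finite web $\mathcal{E}_\xi$ in $X^{2^\kappa}$ with $a_0,a_\xi\in\bigcup\mathcal{E}_\xi$; fix a member $C_\xi^{\ast}\in\mathcal{E}_\xi$ with $a_0\in C_\xi^{\ast}$. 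Put $\mathcal{E}:=\bigcup_{\xi<\kappa}\mathcal{E}_\xi$. Then $\bigcup\mathcal{E}\supseteq A$ is dense in $X^{2^\kappa}$, every member of $\mathcal{E}$ is homeomorphic to $X$ (each lies in some $\mathcal{E}_\xi$), and $|\mathcal{E}|\le\sum_{\xi<\kappa}|\mathcal{E}_\xi|\le\kappa\cdot\omega=\kappa$.

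It then remains to verify the chain condition in the definition of a web. Given $C\in\mathcal{E}_\xi$ and $C'\in\mathcal{E}_{\xi'}$, inside the web $\mathcal{E}_\xi$ there is a chain joining $C$ to $C_\xi^{\ast}$, and inside the web $\mathcal{E}_{\xi'}$ a chain joining $C_{\xi'}^{\ast}$ to $C'$; since $a_0\in C_\xi^{\ast}\cap C_{\xi'}^{\ast}$, concatenating these two chains (through $C_\xi^{\ast}$ and $C_{\xi'}^{\ast}$) gives a chain from $C$ to $C'$ in $\mathcal{E}$. Hence $\mathcal{E}$ is a web and the lemma follows.

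I do not expect a serious obstacle once Theorem \ref{T_HMP} and the previous lemma are in hand; the one point that needs care is reconciling ``$\mathcal{E}$ is a web'' with ``$|\mathcal{E}|\le\kappa$''. This is precisely what dictates the star-shaped construction above: routing every $a_\xi$ through the single fixed point $a_0$ keeps the number of applications of the previous lemma down to $\kappa$ (each contributing only finitely many sets) while making the chain argument immediate.
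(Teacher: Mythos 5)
Your proposal is correct and follows essentially the same route as the paper: apply Theorem \ref{T_HMP}(2) to get a dense $\kappa$-sized set of stair-points and glue them via finite webs from the previous lemma. The only cosmetic difference is that the paper unions the finite webs over all pairs $x,y\in A$ (relying implicitly on the diagonal being a common member of each finite web) while you route everything through a fixed base point $a_0$, which makes the chain verification slightly more explicit; both give $|\mathcal{E}|\le\kappa$.
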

\begin{proof}
By Theorem \ref{T_HMP}(2), there is a dense in $X^{2^\kappa}$ set $A$ of cardinality $\kappa$ such that every point in $A$ is a stair-point.
By the previous lemma, for all $x,y \in A$ we can choose a finite web $\mathcal{E}_{x,y}$ in $X^{2^\kappa}$ such that $x,y\in \bigcup\mathcal{E}_{x,y}$. Clearly, the family $\mathcal{E} := \bigcup_{x,y \in A} \mathcal{E}_{x,y}$ is the required web.
\end{proof}

\begin{theorem}\label{T_Reg}
If $\omega_1\leq\kappa<\lambda$ and $2^\kappa=2^\lambda$, then there is a $\kappa^+$-irresolvable regular space $X$ such that $|X|=\Delta(X)=\lambda$ and all continuous real-valued functions on $X$ are constant.
\end{theorem}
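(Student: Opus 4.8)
The plan is to realize, inside a suitable power of the Ciesielski--Wojciechowski space, the scheme from the introduction: a small dense subspace $C$ carrying only constant real-valued continuous functions, a large dense irresolvable subspace $E$ with $\Delta(E)\geq\kappa^+$, and then $X:=C\cup E$. First I would set $\mu:=2^\kappa=2^\lambda$ and, using Theorem~\ref{T_CW} (applicable because $\kappa\geq\omega_1$ is uncountable), fix a regular, hence $T_1$, space $L$ with $|L|=\kappa$ on which every continuous real-valued function is constant. All further constructions take place in $L^\mu$, which is regular, and so are all of its subspaces.

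To build $C$ I would apply the last lemma above to the space $L$ and the cardinal $\kappa$: it gives a web $\mathcal{E}$ in $L^{2^\kappa}=L^\mu$ such that $\bigcup\mathcal{E}$ is dense in $L^\mu$ and $|\mathcal{E}|\leq\kappa$. Put $C:=\bigcup\mathcal{E}$; then $|C|\leq\kappa$, since each member of $\mathcal{E}$ has cardinality $|L|=\kappa$. Every member of $\mathcal{E}$ is homeomorphic to $L$, so a continuous $f\colon C\to\mathbb{R}$ is constant on each of them; since consecutive members along a web-chain intersect, these constants agree along the chain, and the defining property of a web then forces $f$ to take a single value on all of $C$. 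Thus $C$ is a dense subspace of $L^\mu$ of cardinality at most $\kappa$ on which all continuous real-valued functions are constant.

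To build $E$ I would apply Theorem~\ref{T_submax} to the space $L$ with $\lambda$ in the role of $\kappa$: the hypotheses hold because $L$ is $T_1$, $|L|=\kappa>1$, and $d(L)\leq|L|=\kappa<\lambda$ with $\lambda\geq\omega$. This produces a submaximal --- hence irresolvable --- subspace $E$ of $L^{2^\lambda}=L^\mu$ that is $\lambda$-dense in $L^\mu$ and has $|E|=\lambda$. Now set $X:=C\cup E\subseteq L^\mu$. It is regular, $|X|=\lambda$ (because $|C|\leq\kappa<\lambda=|E|$), and $\lambda$-dense in $L^\mu$ (because $E$ already is), so $\Delta(X)=\lambda\geq\kappa^+$. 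Moreover $C\subseteq X$ is dense in $L^\mu$, hence dense in $X$, so any continuous $f\colon X\to\mathbb{R}$ is constant, equal to some $c$, on $C$; then $f^{-1}(c)$ is a closed subset of $X$ containing the dense set $C$, hence equals $X$, i.e.\ $f\equiv c$. So all continuous real-valued functions on $X$ are constant.

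It remains to show that $X$ is not $\kappa^+$-resolvable, which I expect to be the only step requiring an actual argument. Suppose $\{D_i:i<\kappa^+\}$ were pairwise disjoint dense subsets of $X$. Then the sets $D_i\cap C$ are pairwise disjoint subsets of $C$, and $|C|\leq\kappa$, so $D_i\cap C\ne\emptyset$ for at most $\kappa$ indices $i$; for every other index, $D_i\subseteq E$. Since $\kappa^+>\kappa$, there remain indices $i$ (in fact $\kappa^+$-many, and certainly at least two) with $D_i\subseteq E$. For such an $i$, density of $D_i$ in $X$ together with $D_i\subseteq E$ gives density of $D_i$ in $E$: a nonempty open subset of $E$ is the trace on $E$ of a nonempty open $U\subseteq X$, and $D_i\cap U\ne\emptyset$ with $D_i\cap U\subseteq E$. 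Thus $E$ would contain at least two disjoint dense subsets, contradicting its irresolvability. Hence $X$ is not $\kappa^+$-resolvable, so $X$ is $\kappa^+$-irresolvable and has all the required properties. It is exactly this counting that forces the choice $|L|=\kappa$ and the use of a web of size at most $\kappa$ in place of an arbitrary dense connected subspace of $L^\mu$.
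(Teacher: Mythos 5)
Your proof is correct and follows essentially the same route as the paper: take $L$ from Theorem~\ref{T_CW}, let $C$ be the union of a web of size at most $\kappa$ dense in $L^{2^\kappa}$, let $E$ be a $\lambda$-dense submaximal subspace of cardinality $\lambda$ obtained from Theorem~\ref{T_submax} applied to $L$ with $\lambda$ (using $2^\kappa=2^\lambda$), and set $X:=C\cup E$. The paper leaves the final verifications implicit; your explicit counting argument for $\kappa^+$-irresolvability and the chain argument for constancy of continuous functions on $C$ are exactly the intended ones.
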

\begin{proof}
Denote $\mu := 2^\kappa$.
Take any regular space $L$ of cardinality $\kappa$ with the property that all continuous real-valued functions on $L$ are constant.
Take any web $\mathcal{E}$ in $L^\mu$ such that the set $C := \bigcup \mathcal{\mathcal{E}}$ is dense in $L^\mu$ and $|\mathcal{E}| \leq \kappa$.
Clearly, $|C|=\kappa$ and all continuous real-valued functions on $C$ are constant.
Denote by $E$ any $\lambda$-dense irresolvable subspace of cardinality $\lambda$ in $L^\mu$.
The subspace $X := C \cup E$ is the required one.
\end{proof}

\begin{corollary}\label{C_Reg}
ZFC does not prove the hypothesis ``If a regular pseudocompact space $X$ is connected and $\Delta(X)>\frak{\omega_1}$, then $X$ is $\omega_2$-resolvable".
\end{corollary}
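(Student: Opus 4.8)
The plan is to deduce this from Theorem~\ref{T_Reg} in exactly the way the corollary of Section~4 is deduced from Theorem~\ref{T_Tych}. First I fix any model of ZFC in which $2^{\omega_1}=2^{\omega_2}$. The consistency of this statement is classical: starting from a ground model of GCH, force with the poset of countable partial functions from $\omega_3\times\omega_1$ to $2$; it is countably closed (so it adds no reals and preserves $\omega_1$) and, under CH, $\omega_2$-cc (so it preserves all larger cardinals), and a routine nice-name count gives $2^{\omega_1}=\omega_3=2^{\omega_2}$ in the extension. One may also arrange this simultaneously with $\neg\mathrm{CH}$, as advertised in the introduction, by first forcing a $\neg\mathrm{CH}$ configuration at $\omega$. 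In such a model I apply Theorem~\ref{T_Reg} with $\kappa=\omega_1$ and $\lambda=\omega_2$, noting that the hypotheses $\omega_1\le\kappa<\lambda$ and $2^\kappa=2^\lambda$ are then satisfied.

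Theorem~\ref{T_Reg} thereby produces a regular space $X$ with $|X|=\Delta(X)=\omega_2$ on which every continuous real-valued function is constant and which is not $\kappa^+$-resolvable, i.e. not $\omega_2$-resolvable. As observed after Theorem~\ref{T_CW}, a space all of whose continuous real-valued functions are constant is automatically pseudocompact (constant functions are bounded) and connected (a partition into two nonempty clopen sets would yield a non-constant two-valued continuous function). Hence $X$ is a regular, pseudocompact, connected space with $\Delta(X)=\omega_2>\omega_1$ that is not $\omega_2$-resolvable, so the quoted hypothesis is false in this model. Since the hypothesis can fail, ZFC does not prove it.

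I expect no genuine obstacle here: the only non-routine ingredient is the well-known consistency of $2^{\omega_1}=2^{\omega_2}$, and all of the topological substance is already packaged in Theorem~\ref{T_Reg} (and, behind it, in Theorem~\ref{T_CW} together with the two web lemmas of Section~5). The remaining work is the trivial verification that ``all continuous real functions constant'' gives both pseudocompactness and connectedness, and that $\kappa^+$-irresolvability with $\kappa=\omega_1$ is precisely the failure of the stated implication.
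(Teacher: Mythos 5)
Your proposal is correct and follows exactly the paper's route: fix a model of $2^{\omega_1}=2^{\omega_2}$ and apply Theorem~\ref{T_Reg} with $\kappa=\omega_1$, $\lambda=\omega_2$. The extra details you supply (the forcing argument for the consistency of $2^{\omega_1}=2^{\omega_2}$ and the check that constancy of all continuous real-valued functions yields pseudocompactness and connectedness) are correct and merely make explicit what the paper leaves implicit.
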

\begin{proof}
We take any model with $2^{\omega_2}=2^{\omega_1}$ and apply Theorem \ref{T_Reg} to $\kappa= \omega_1$ and $\lambda=\omega_2$.
\end{proof}

\section{Questions}

\begin{question}
Is it consistent that every Tychonoff (or even regular) space with a property $\mathcal{P}$ is maximally resolvable, if:

\begin{itemize}

\item[(1)] $\mathcal{P}$ is pseudocompactness?

\item[(2)] $\mathcal{P}$ is connectedness?

\item[(3)] $\mathcal{P}$ is local connectedness?

\item[(4)] $\mathcal{P}$ is any combination of (1-3)?

\item[(5)] $\mathcal{P}$ is any combination of (1-3) and GCH holds?

\end{itemize}

\end{question}

\begin{question}
Suppose $\neg\mathrm{CH}$. Is there a crowded regular locally connected space of cardinality less than $\frak{c}$?
\end{question}

And, of course, two of the most intriguing questions (posed in \cite{survey, CG}) remain open:

\begin{itemize}

\item Is every infinite regular (or Tychonoff) connected space resolvable?

\item Is every crowded regular (or Tychonoff) pseudocompact space resolvable in ZFC?

\end{itemize}


\medskip


\bibliographystyle{model1a-num-names}
\bibliography{<your-bib-database>}

\end{document}